\newcommand{\C}{\mathcal{C}}
\newcommand{\HH}{\mathcal{H}}
\newcommand{\minimals}{\mbox{minimals}}
\newcommand{\maximals}{\mbox{maximals}}
\newcommand{\msg}{\mbox{msg}}
\newcommand{\m}{\mathfrak{m}}
\newcommand{\F}{\mathfrak{F}}
\newcommand{\N}{\mathbb{N}}
\newcommand{\Z}{\mathbb{Z}}
\newcommand{\Np}{\mathbb{N}^p}
\newtheorem {Definition}{Definition}
\newtheorem {Proposition}{Proposition}
\newtheorem {proof}{Proof}
\newtheorem {Example}{Example}
\newtheorem {Lemma}{Lemma}
\newtheorem {Corollary}{Corollary}
\title{$\C$-semigroups with its induced order}
\author{Daniel Marín-Aragón and Raquel Tapia Ramos}
\date{}
\begin{document}

\maketitle

\begin{abstract}
Let $\C\subset\N^p$ be an integer polyhedral cone. An affine semigroup $S\subset\C$ is a $\C$-semigroup if $|\C\setminus S|<+\infty$. This structure has always been studied using a monomial order. The main issue is that the choice of these orders is arbitrary. In the present work we choose the order given by the semigroup itself, which is a more natural order. This allows us to generalise some of the definitions and results known from numerical semigroup theory to $C$-semigroups.\\
Keyword: $\C$-semigroup; Wilf's conjecture; commutative monoids
\end{abstract}

\section{Introduction}

Let $\N$ be the set of non-negative integers, we say that $S\subset\N$ is a numerical semigroup if it is an additive monoid and its complementary is finite. This structure has been study broadly in the literature (see for example \cite{libroRosales, ref1, ref4}). In \cite{libroRosales} it is proven that if $a_1,\ldots,a_e\in\N$ are coprimes, then $\langle a_1,\ldots,a_e\rangle=\{\lambda_1 a_1+\ldots+\lambda_e a_e\mid \lambda_1,\ldots \lambda_e\in\N\}$ is a numerical semigroup. The number of generator, i.e. $e$ is called the embedding dimension of $S$ and the number $|\N\setminus S|=g\in\N$, the genus of $S$. Other relevant invariants are: the Frobenius number defined as $F(S)=\max\{n\in\Z\mid n\notin S\}$, the conductor defined as $c(S)=F(S)+1$ and the left elements defined as $L(S)=|\{x\in S\mid x<F(S)\}|$. 

Related to these invariants there are still several open problems as Brass-Amorós's conjecture (see \cite{ref2}) or Wilf's conjecture (see \cite{wilf}) and, consequently, a lot of papers are published trying to solve them (see \cite{ref3, fromentin, wilfShalom,wilfpalestina}). The first conjecture, put forward 15 years ago, states that if $S_g$ is the set of all numerical semigroups with genus $g$ then $|S_{g+1}|\geq|S_g|$ for all $g\geq 0$. This conjecture is true for the numerical semigroups with genus less than $67$ (see \cite{fromentin}) and with genus greater than an unknown $g$ (see \cite{zhai}). On the other hand, the Wilf's conjecture was raised in 1978 and establishes that $e(S)L(S)\geq c(S)$.

In order to try to solve these conjectures, in \cite{nuestrowilf} the concept of numerical semigroup is generalised as follows: let $\C\subset\N^p$ be an integer polyhedral cone, then a $\C$-semigroup is an additive affine semigroup $S\subset\C$ with finite complementary set. Thus, the transfer the problem from $\N$ to $\Np$. In this new structure, the classical invariants of commutative monoids theory cited previously have been study (see for example \cite{Juande} or \cite{Adrian}).

In all these papers there is a huge issue, unlike in $\N$, in $\Np$ there is no a canonical total order. Therefore researchers have to choose a total order (as the graded lexicographical order) in order to define invariants as the Frobenius number but it is not always clear what happens when this order is changed. Our main goal is to show that this generalisation can be done in a different way which does not depend of the researcher's choice.

In this work, we use the order induced by the semigroup itself in order to do the generalisation, i.e., given $a,b\in S$ we say that $a\leq b$ if and only if $a-b\in S$. This order has already been applied to numerical semigroups (see for example \cite{ordeninducido}) but never to $\C$-semigroups. With this choice, the order is always clear and does not depend of an arbitrary choice. Thanks to this order, we open a new line for obtaining results which can be applied back to numerical semigroups. In order to provide the examples shown we have used a computer with a CPU Inter Core i7 8th Gen and the codes which are available at \cite{codigo}.

The content of this work is organised as follows: In Section \ref{S:preliminares}, we give the main definitions and some general results in order to provide background to the reader. We show our generalisation and how apply the changes to the main invariants. Then, in Section \ref{s:pesos} we introduce a new invariant, the quasi-elasticity (based on the concept of elasticity in numerical semigroups, see \cite{elasticity}) and show its properties. Section \ref{s:idemaxial} is devoted to idemaxial semigroups and we show bounds for computing some invariants in this family. Finally, in Section \ref{s:wilf}, we generalize the Wilf's conjecture by means of this partial order.

\section{Preliminaries}\label{S:preliminares}

Let $\C\subset\N^p$ be an integer polyhedral cone, with $\tau_1,\ldots,\tau_q$ its extremal rays and $h_1,\ldots,h_r$ its supporting hyperplanes. We say that $S$ is a $\C$-semigroup if it is an affine semigroup (i.e. $S$ is finitely generated, cancellative, reduced and torsion free), $S\subset\C$ and $|\C\setminus S|<+\infty$. The set $\HH(S)=\C\setminus S$ is called the set of gaps, or the gap set, of $S$.

We define the induced order of $S$ as
$$x\leq_{S}y\iff y-x\in S.$$
We use the symbol $\leq$ instead of $\leq_{S}$ if there is no risk of misunderstanding. Note that if $x\leq_{S}y$ then $x\leq_{\N^p}y$. The converse, trivially, is not true.

We recall that in a numerical semigroup, the multiplicity is the least element not zero of $S$ and the Frobenius number is the greatest element of $\Z$ which is not in $S$. We generalise these definitions as follows.  

\begin{Definition}
We define the set of multiplicities of $S$ as $\minimals_\leq(S)$ we denote this set as $\m(S)$.
We define the set of Frobenius as $\F(S)=\maximals_{\leq_{\C}}(\HH(S))$.    
\end{Definition}

As before, we use $\m$ and $\F$ when there is no risk of confusion.
Since $\HH(S)$ is finite and $\F\subset\HH(S)$ then $\F$ is also finite. Now we prove that $\m$ verifies this finiteness.

\begin{Proposition}
Let $S$ be a $\C$-semigroup, then the set $\m(S)$ is finite.
\end{Proposition}

\begin{proof}
For each supporting hyperplane $h_i\equiv a_1x_1+\ldots+a_px_p=0$ we define $h'_i(\alpha)\equiv a_1x_1+\ldots+a_px_p=\alpha$. We pick $\alpha_1,\ldots,\alpha_r$ such that if $f\in\F$ and $x\in h'_i(\alpha_i)$ then $fx\leq 0$.

We define $D=\{d\in\C\mid dx\leq 0,\ \forall x\in\cup_1^r h'_1(\alpha_i)\}$. Let $c\in\C\setminus 3D$, then $c\in \alpha\C$ for some $\alpha\in\N$, $\alpha>4$. Since there exists $d\in 3D\cap\C$ such that $c-d\in\C$, $\m\subset 3D$. Therefore $\m$ is finite.
\end{proof}

In \cite{Juande}, the authors give an algorithm for computing the gap set of a $\C$-semigroup, this allows us to give  Algorithm \ref{alg:computeM}
for computing $\m$.

\begin{algorithm}
\caption{Computing the set $\m(S)$}\label{alg:computeM}
\textbf{Input:} Set of gaps of $S$.\\
\textbf{Output:} $\m(S)$.
\begin{algorithmic}
\State Compute $\alpha_1,\ldots,\alpha_r$.
\State Define $2D$.
\State $X\gets\{First(D)\}$
\For{$d\in 2D$}
    \For{$d\in 2D$}
        \If{$d\leq x$}
            \State $X \gets X\setminus\{x\}\cup d$
        \EndIf
    \EndFor
\EndFor
\end{algorithmic}
\textbf{return} $X$.
\end{algorithm}

Given a $\C$-semigroup, it admits a unique minimal system of generators, denoted by $\msg(S)=\{a_1,\ldots,a_e\}$. That means that $S=\{n_1a_1+\ldots+n_ea_e\mid n_i\in\N,\ a_i\in \msg(S), i=1,\ldots,e\}$ and there is not a proper subset of $\msg(S)$ verifying this condition. The following result proves that the minimal system of generators is in fact the minimals of the $S$ with its induced order.

\begin{Proposition}
Let $S$ be a $\C$-semigroup, then $\msg(S)=\m(S)$.
\end{Proposition}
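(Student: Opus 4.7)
The plan is to prove both inclusions by directly unwinding the definitions, using the key observation that, under the induced order $\leq_{S}$, an element $x\in S\setminus\{0\}$ is non-minimal precisely when it admits a decomposition $x=a+b$ with $a,b\in S\setminus\{0\}$. Since $\msg(S)$ is, by definition, the set of elements of $S\setminus\{0\}$ that cannot be so decomposed, the equivalence falls out immediately.

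For the inclusion $\m(S)\subseteq \msg(S)$, I would take $x\in\m(S)$ and argue by contradiction: if $x\notin\msg(S)$, then $x=a+b$ for some $a,b\in S\setminus\{0\}$; but then $x-a=b\in S$ and $b\neq 0$, so $a<_{S}x$, contradicting the minimality of $x$ with respect to $\leq_{S}$.

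For the reverse inclusion $\msg(S)\subseteq \m(S)$, take $x\in\msg(S)$ and suppose it is not minimal, i.e.\ there exists $y\in S\setminus\{0\}$ with $y<_{S}x$. Setting $z=x-y$, we have $z\in S$ by the definition of the induced order, and $z\neq 0$ since $y\neq x$. Thus $x=y+z$ is a decomposition into two nonzero elements of $S$, contradicting $x\in\msg(S)$.

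No serious obstacle is expected; the only delicate point is the role of $0$, which is a global $\leq_{S}$-minimum but is not a generator. Following the preceding paragraph's convention that the multiplicity is ``the least element not zero of $S$'', I would make explicit at the start that $\m(S)=\minimals_{\leq}(S\setminus\{0\})$, after which the two short arguments above close the proof.
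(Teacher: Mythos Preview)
Your proposal is correct and follows essentially the same approach as the paper: both inclusions are proved by contradiction, using that $x$ fails to be $\leq_{S}$-minimal iff $x=y+z$ with $y,z\in S\setminus\{0\}$. Your treatment is in fact slightly more careful than the paper's, since you explicitly address the role of $0$ and insist that $\m(S)=\minimals_{\leq}(S\setminus\{0\})$, a point the paper's definition leaves implicit.
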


\begin{proof}
    $\msg(S)\subset\m(S)$: Let $x\in\msg(S)$ and we assume that $x\notin\m(S)$. Then there exists $y\in S$ such that $y\leq x$, i.e. $x-y=z\in S$. So $x=y+z$ which contradicts the fact that $x\in\msg(S)$.

    $\m(S)\subset\msg(S)$: Let $x\in\m(S)$ and we assume that $x\notin\msg(S)$. Then there exists $y,z\in S$ such that $x=y+z$, so $x-y=z\in S$ which contradicts the fact that $x\in\m(S)$.
\end{proof}

In \cite{Ojeda} the authors define the pseudo-Frobenius number of a $C$-semigroup as $a\in\HH(S)$ verifying $a+(S\setminus\{0\}\subset S$. The set of all the pseudo-Frobenius number of $S$ is denoted by $PF(S)$.

Our following result relates $PF(S)$ and $\F$.

\begin{Lemma}
If $S$ is a $\C$-semigroup then $\F\subset PF(S)$.
\end{Lemma}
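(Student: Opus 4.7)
The plan is a direct contradiction argument that just unpacks the two definitions. Take $f\in\F$, so $f\in\HH(S)$ and $f$ is maximal in $\HH(S)$ with respect to the cone order $\leq_{\C}$, meaning there is no $g\in\HH(S)$ with $g\neq f$ and $g-f\in\C$. To show $f\in PF(S)$, I need to verify that $f+s\in S$ for every $s\in S\setminus\{0\}$.

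Fix such an $s$ and suppose, for a contradiction, that $f+s\notin S$. First I would check that $f+s$ lies in $\C$: since $f\in\HH(S)\subset\C$ and $s\in S\subset\C$, and $\C$ is a (polyhedral, hence additively closed) cone, we get $f+s\in\C$. Combined with the assumption $f+s\notin S$, this gives $f+s\in\C\setminus S=\HH(S)$.

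Next I would produce the required incomparability violation. By construction $(f+s)-f=s\in S\subset\C$, so by the definition of $\leq_{\C}$ we have $f\leq_{\C}f+s$; and since $s\neq 0$, this inequality is strict. Thus $f+s$ is a gap that strictly dominates $f$ in the order $\leq_{\C}$, contradicting the maximality of $f$ in $\maximals_{\leq_{\C}}(\HH(S))$. Hence $f+s\in S$, so $f+(S\setminus\{0\})\subset S$, i.e. $f\in PF(S)$.

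There is essentially no obstacle here beyond bookkeeping: the only point where one must be careful is distinguishing the two orders in play, $\leq_{S}$ (which drives $\m(S)$) and $\leq_{\C}$ (which drives $\F(S)$). The argument crucially uses $\leq_{\C}$ rather than $\leq_{S}$ because we need the element witnessing dominance, namely $s$, to lie in $\C$, and this is automatic for any $s\in S$; trying to run the same argument with $\leq_{S}$ would not add strength, since we already have $s\in S$, but phrasing with $\leq_{\C}$ matches the definition of $\F$.
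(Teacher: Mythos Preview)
Your proof is correct and follows essentially the same contradiction argument as the paper: assume $f\in\F$ is not pseudo-Frobenius, produce a gap $h=f+s$ with $h-f=s\in S\subset\C$, and contradict the $\leq_{\C}$-maximality of $f$. Your write-up is in fact more careful than the paper's (you explicitly verify $f+s\in\C$ and note that $s\neq 0$ gives strict dominance), but the underlying idea is identical.
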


\begin{proof}
    If $f\in\F$ and $f\notin PF(S)$ then $f+s=h\in\HH(S)$ for some $s\in S$. But that means $f-h\in S$ and this is a contradiction.
\end{proof}

Next example shows that the converse is not true.

\begin{Example}
    Let $\C$ be the cone spanned by $(1,0)$ and $(1,1)$ and $S=\C\setminus\{(1,1), (2,2)\}$. The element $(1,1)\in PF(S)$ but since $(2,2)-(1,1)\in\C$ we have that, $(1,1)\notin\F$.
\end{Example}

In \cite{Ojeda} the following definition for the Apery set is given.

\begin{Definition}
    Given $b\in S$, $Ap(S,b)=\{a\in S\mid a-b\in\HH(S)\}$.
\end{Definition}

Clearly, this set is finite ($|Ap(S,b)|\leq |\HH(S)|\leq +\infty$) and $Ap(S,b)-b\in\HH(S)$.
The following question arise: is there a relationship between $Ap(S,b)-b$ and $\F$?

\begin{Lemma}
    Let $S$ be a $\C$-semigroup, then $\F\subset Ap(S,b)-b$ for all $b\in S\setminus\{0\}$.
\end{Lemma}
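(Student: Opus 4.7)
The plan is to reduce the statement to the previous lemma \emph{If $S$ is a $\C$-semigroup then $\F\subset PF(S)$}, after unpacking the definition of $Ap(S,b)-b$.

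First I would fix $b\in S\setminus\{0\}$ and an element $f\in\F$, and write down what membership in $Ap(S,b)-b$ really demands. By the definition of the Ap\'ery set, $f\in Ap(S,b)-b$ is equivalent to the existence of some $a\in S$ with $a-b\in\HH(S)$ and $f=a-b$. The natural candidate is of course $a:=f+b$, and with this choice the condition $a-b=f\in\HH(S)$ holds automatically because $\F\subset\HH(S)$. So the entire content of the lemma reduces to proving that $f+b\in S$.

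Next I would invoke the earlier lemma $\F\subset PF(S)$. By definition of the pseudo-Frobenius set, every $p\in PF(S)$ satisfies $p+s\in S$ for all $s\in S\setminus\{0\}$. Applying this to $p=f$ and $s=b$ yields $f+b\in S$, which is exactly the missing condition. Therefore $f+b\in Ap(S,b)$ and $f=(f+b)-b\in Ap(S,b)-b$, finishing the proof.

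There is no serious obstacle here: the argument is essentially a two-line bookkeeping argument built on top of $\F\subset PF(S)$. The only point worth being careful about is the hypothesis $b\neq 0$, which is exactly what lets us apply the pseudo-Frobenius property (the condition $f+(S\setminus\{0\})\subset S$ excludes $b=0$, but that case would be trivial since $Ap(S,0)-0$ is empty and $\F$ could be nonempty, so the hypothesis $b\in S\setminus\{0\}$ in the statement is genuinely needed).
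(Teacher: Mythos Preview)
Your proof is correct. The only difference from the paper's argument is in how you obtain the key fact $f+b\in S$: you invoke the earlier lemma $\F\subset PF(S)$ and then apply the pseudo-Frobenius property with $s=b$, whereas the paper argues directly from maximality (if $f+b$ were a gap then $b\in\C$ would give $f\leq_{\C} f+b$, contradicting $f\in\maximals_{\leq_{\C}}(\HH(S))$). Your route is more modular and avoids repeating an argument already carried out; the paper's route is self-contained but is essentially re-proving inline the special case of $\F\subset PF(S)$ needed here. Once $f+b\in S$ is known, the remaining verification that $f+b\in Ap(S,b)$ (because $(f+b)-b=f\in\HH(S)$) is identical in both approaches.
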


\begin{proof}
    Let $f$ be in $\F$, then $f+b=s\in S$. Note that if $f+b=h\in\HH(S)$ then $h>f$ and this is a contradiction with $f\in\F$. Therefore, $s-b\in\HH(S)$ and $f\in Ap(S,b)-b$.
\end{proof}

In \cite{Ojeda}, the authors define the Frobenius elements, denoted by $F(S)$, as the gaps such that they are the maximum of the set of gaps for some term order of $\N^p$. We study the relation of this set with $\F$. Firstly, by \cite[pp. 72-73]{Cox}, every monomial ordering, $\preceq$, can be considered a weight order for some $a=(a_1,\ldots,a_d)\in\mathbb{R}^d_\geq$, i.e. $v\preceq w$ if and only if $v\cdot a\leq w\cdot a$ with $\cdot$ the inner product. Note that if these inner products are the same, we can choose another vector $\tilde{a}$ for tiebreaker. This is the same that saying that there exist a hyperplane such that divides the space in two region, one containing $v$ and the other one containing $w$. Next example shows that, in general, $\F\neq F(S)$.

\begin{Example}\label{e:fnotf}
    Let $\C$ be the cone spanned by $(1,0)$ and $(1,1)$ and $S=\C\setminus\{(1,0), (1,1), (2,0),(2,2),$ $(3,0), (3,1),(4,0)\}$. Then, $(3,0)\in\F$ but $(3,0)\not\in F(S)$.
\end{Example}

\begin{figure}
    \centering
\includegraphics[width=0.5\textwidth]{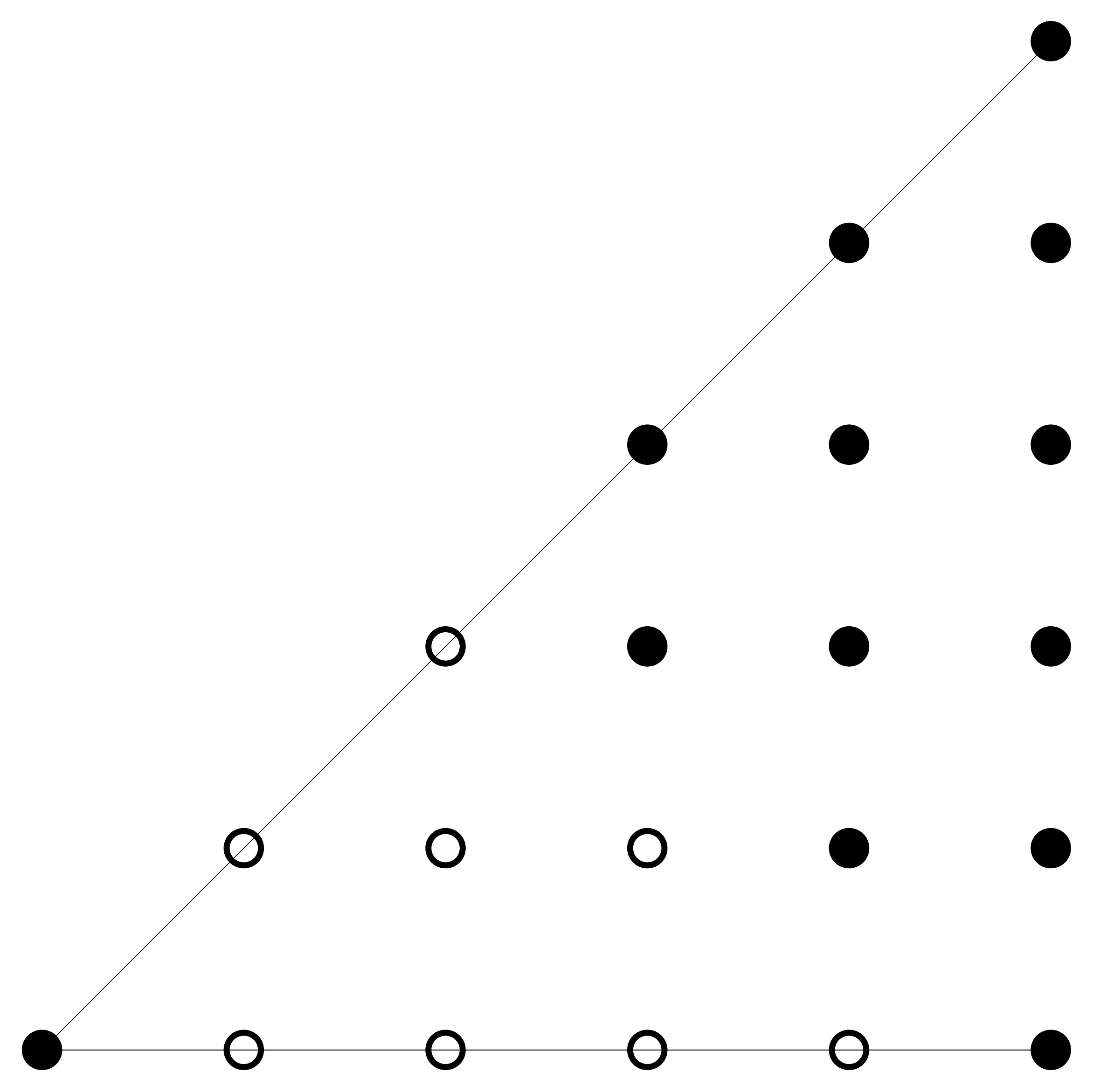}
    \caption{Figure of Example \ref{e:fnotf}.}
\end{figure}

The other inclusion, however, it is true.

\begin{Proposition}
    With the previous notation, $F(S)\subset\F$.
\end{Proposition}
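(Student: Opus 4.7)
The plan is to argue by contraposition (or equivalently, by contradiction assuming $f \in F(S) \setminus \F$) and exploit the fact that any monomial order on $\N^p$ is compatible with addition and has $0$ as its minimum.

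First, I would unpack the two definitions. If $f \in F(S)$, then by definition there is a monomial ordering $\preceq$ on $\N^p$ such that $f$ is the $\preceq$-maximum of $\HH(S)$. Suppose, for a contradiction, that $f \notin \F$; then $f$ is not maximal in $\HH(S)$ with respect to $\leq_{\C}$, so there exists $h \in \HH(S)$, $h \neq f$, with $f \leq_{\C} h$, meaning $c := h - f \in \C \setminus \{0\} \subset \N^p \setminus \{0\}$.

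Next I would invoke the two defining properties of any monomial order: (i) $0 \preceq v$ for every $v \in \N^p$ with strict inequality whenever $v \neq 0$, and (ii) $\preceq$ is translation-invariant, $u \preceq v \Rightarrow u + w \preceq v + w$. Applying (i) to $c \neq 0$ gives $0 \prec c$, and adding $f$ on both sides via (ii) yields $f \prec f + c = h$. Since $h \in \HH(S)$, this contradicts the fact that $f$ is the $\preceq$-maximum of $\HH(S)$. Alternatively, one can use the weight-order description recalled just before Example \ref{e:fnotf}: writing $\preceq$ as a weight order induced by $a \in \mathbb{R}^p_{\geq}$ (with auxiliary vectors for tie-breaking), one has $c \cdot a \geq 0$, and tie-breaking produces $f \cdot a \leq h \cdot a$ with strict inequality at some level, giving $f \prec h$.

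Neither step involves any real obstacle; the only delicate point is making sure to justify that every monomial order genuinely is translation-invariant with $0$ as its minimum (equivalently, that the tie-breaking weight vectors can be chosen to make $f \prec h$ whenever $h - f \in \N^p \setminus \{0\}$). This is a standard fact, cited in the paper through the reference to \cite[pp.~72--73]{Cox}, so the proof reduces to the short contradiction outlined above.
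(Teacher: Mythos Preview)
Your proposal is correct and follows essentially the same idea as the paper's proof: both exploit that any monomial order refines the componentwise order on $\N^p$ (hence the cone order $\leq_{\C}$, since $\C\subset\N^p$), so a $\preceq$-maximum of $\HH(S)$ must also be $\leq_{\C}$-maximal. The paper phrases this geometrically via the separating hyperplane with positive normal vector (the weight-order description you mention as your alternative), while your primary argument uses the translation-invariance axiom directly; the content is the same.
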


\begin{proof}
    Let $f\in F(S)$ then there exists a hyperplane $\pi$ such that its normal vector has all its coordinates positives and that divides the space in two areas, $A_1$ and $A_2$ in such a way that $\HH(S)\cap A_1=\{f\}$. Therefore, $(f+\C)\cap\HH(S)=\{f\}$ and $f\in F$.
\end{proof}

The set $\F$ will be studied with more details in the following section.

\section{Weight sets}\label{s:pesos}

In \cite{Juande}, it is proven that if $\tau$ is an extremal ray of $\C$ and $S$ is a $\C$-semigroup, then $\tau\cap S$ is isomorphic to a numerical semigroup. However, the projection of the sum of the coordinates of the elements of $S$ does not verify this property as we show in this section.

This projection is not a capricious choice but is based on the one made in the study of factorization lengths in the case of numerical semigroups, see for example \cite{omega}.

\begin{Definition}
Given an element $(x_1,\ldots,x_p)\in\N^p$ we define its weight as $w(x_1,\ldots,x_p)=x_1+\ldots+x_p$. 
\end{Definition}

We can extend this definition to a set: if $A\subset\Np$ then $w(A)=\{w(a)\mid a\in A\}$.
Let $\Pi_t$ be the plane define as $x_1+\ldots+a_p=t$.
Given a $C$-semigroup, $S$, we associate  a set $W$ as follows:
$$x\in W \iff S\cap\Pi_x\neq\emptyset\ \&\ S\cap\Pi_x\cap\F=\emptyset.$$

Note that this set has the following properties:
\begin{itemize}
    \item It is unbounded.
    \item Its complementary is finite.
    \item It contains the zero element.
    \item In general, it is not closed by addition as the following example shows.
\end{itemize}

\begin{Example}
    Let $\C$ be the cone spanned by $(1,0)$ and $(1,1)$ and let $S=\C\setminus\{(1,1),(2,2)\}$. In this case, $\F(S)=\{(2,2)\}$ and $W=\N\setminus\{4\}$. 
\end{Example}

Therefore, $W$ is not a numerical semigroup. We are interested in study $w(\F)$. In particular, we are want to be able to compute the following invariant which is inpired by the elasticity studied in \cite{elasticity}.

\begin{Definition}
    Given $S$ a $\C$-semigroup, we define the quasi-elasticity of $w(\F)$ as $\rho(S)=\frac{\max(w(\F)}{\min(w(\F))}$.
\end{Definition}

Our first questions are: Given a fixed cone $\C$ can be found a $\C$-semigroup such that $\rho$ is as big as we want. If not, what value bounds it?

\begin{Proposition}
    Let $\C$ be a cone then $\rho$ is not bounded.
\end{Proposition}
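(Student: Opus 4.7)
The plan is to exhibit, for each $N\in\N$, an explicit $\C$-semigroup $S_N$ with two incomparable Frobenius elements of vastly different weights, forcing $\rho(S_N)\to\infty$ as $N\to\infty$. I assume $\dim\C\geq 2$, since otherwise $\C$ is a half-line, $\F(S)$ has a single element, and $\rho\equiv 1$. Pick two distinct extremal rays $\tau_1\neq\tau_2$ of $\C$ with primitive lattice generators $v$ and $u$, and define
$$S_N:=\C\setminus\bigl(\{v,2v,\ldots,Nv\}\cup\{u\}\bigr).$$

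The one nontrivial point is that $S_N$ is closed under addition, and the whole argument rests here. The key tool is the face property of extremal rays: if $s_1,s_2\in\C$ and $s_1+s_2\in\tau_i$, then $s_1,s_2\in\tau_i$. Applied to $\tau_1$, a putative sum landing in $\{v,\ldots,Nv\}$ forces both summands to be nonnegative integer multiples of $v$; but the multiples of $v$ inside $S_N$ are $\{0\}\cup\{kv:k\geq N+1\}$, so no such sum can equal $kv$ with $1\leq k\leq N$. The analogous face argument for $\tau_2$, combined with the primitivity of $u$, rules out $s_1+s_2=u$. Hence $\HH(S_N)=\{v,2v,\ldots,Nv,u\}$ is finite and $S_N$ is a $\C$-semigroup.

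To extract $\rho(S_N)$, observe that $v\leq_{\C}2v\leq_{\C}\cdots\leq_{\C}Nv$ collapses the chain on $\tau_1$ to the single maximum $Nv$, while $u$ is incomparable with every $kv$: the supporting hyperplane of $\C$ that vanishes on $\tau_2$ is strictly positive on $v$, so $u-kv\notin\C$; symmetrically $kv-u\notin\C$. Therefore $\F(S_N)=\{Nv,u\}$, and for $N$ large enough that $Nw(v)\geq w(u)$ one has $\rho(S_N)=Nw(v)/w(u)$, which tends to $\infty$ with $N$. The main obstacle is establishing additive closure of $S_N$; once that is in place via the face property of extremal rays, identifying the Frobenius set and reading off the ratio are routine.
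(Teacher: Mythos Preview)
Your argument is correct and takes a genuinely different route from the paper. The paper proceeds by a boosting construction: given any $\C$-semigroup with $\rho=M$, it picks Frobenius elements $f_1,f_2$ realising the minimum and maximum weights, then selects $f_3\in (f_2+\C)\setminus(f_1+\C)$ and assembles a new $\C$-semigroup from translated cones so that its Frobenius set is $\{f_1,f_3\}$ with $w(f_3)>w(f_2)$; iterating yields an unbounded sequence. You instead write down the whole family $S_N$ in one stroke by placing $N$ gaps along one extremal ray and a single gap on another. Your approach buys transparency---additive closure follows cleanly from the face property of extremal rays, and $\F(S_N)=\{Nv,u\}$ is immediate---at the small cost of committing to extremal rays, which is why you need the $\dim\C\geq 2$ caveat (left implicit in the paper). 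The paper's construction is in principle more flexible, starting from an arbitrary seed semigroup, but its verification that the assembled object is actually a $\C$-semigroup is considerably sketchier than yours.
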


\begin{proof}
    Let $S$ be a $\C$-semigroup with $\rho(S)=M\in\N$. Let $f_1,f_2\in\F$ such that $\omega(f_1)=\min(\omega(\F))$ and  $\omega(f_2)=\max(\omega(\F))$. Denoting by $\C_i=f_i+\C$, we have two cases:
    \begin{itemize}
        \item If $f_1\neq f_2$, then clearly $f_2\notin\C_1$. We choose $f_3\in C_2\setminus\C_1$. If we consider the $\C$-semigroup $(\C_1\setminus f_1)\cup(\C_3\setminus f_3)$, we obtain the result.
        \item If $f_1=f_2$, then we only have to choose $f_2$ and $f_3$ such that they are not comparable and $f_2\neq f_3$. Then we built the semigroup $(\C_2\setminus f_2)\cup(\C_3\setminus f_3)$ and apply the previous case.
    \end{itemize}
    Therefore, $\rho$ is unbounded.
\end{proof}

We have the following corollary.

\begin{Corollary}

Given a cone $\C$ we can find a sequence $S_k$, $k\in\N$ of $\C$-semigroups such that $\lim_{k\to\infty}\rho(S_k)=\infty$.
\end{Corollary}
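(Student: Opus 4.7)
The plan is to read the corollary off directly from the unboundedness established in the previous proposition. That proposition asserts that the set $\{\rho(S) : S \text{ is a } \C\text{-semigroup}\}$ has no upper bound in $\mathbb{R}$; in particular, for every $k \in \N$ there exists at least one $\C$-semigroup whose quasi-elasticity is at least $k$. I would simply pick, for each $k$, one such semigroup and call it $S_k$. The chain of inequalities $\rho(S_k) \geq k \to \infty$ then finishes the proof.

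If one prefers an explicit construction rather than a bare invocation of the previous result, I would iterate the procedure used in its proof. Starting from a seed $\C$-semigroup $S_0$ with $\F(S_0) \neq \emptyset$ (obtained, for instance, by removing a single interior lattice point of $\C$), I would apply the recipe $(\C_1 \setminus \{f_1\}) \cup (\C_3 \setminus \{f_3\})$ from the previous proof, but at step $k$ choose the auxiliary point $f_3$ far enough out along an extremal ray $\tau_i$ of $\C$ so that $w(f_3) \geq k \cdot w(f_1)$; this guarantees $\rho(S_{k+1}) \geq k$. Extremal rays supply lattice points of arbitrarily large weight, so the construction is always available.

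The only delicate point in either version is making sure that after the modification the newly inserted $f_3$ actually realises the maximum of $w(\F)$ on the new semigroup, so that $\rho$ really does grow as claimed. Since the gaps of $(\C_1 \setminus \{f_1\}) \cup (\C_3 \setminus \{f_3\})$ are confined to $\{f_1, f_3\}$ together with a controlled portion of the previous gap set, and $f_3$ can be chosen with weight exceeding that of every other gap, this is a straightforward bookkeeping check rather than a genuine obstacle. I do not anticipate any structural difficulty: the corollary is essentially a restatement of the preceding proposition together with the trivial observation that an unbounded set of real numbers contains a sequence diverging to $+\infty$.
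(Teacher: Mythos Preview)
Your proposal is correct and matches the paper's approach: the paper gives no separate proof of the corollary, treating it as an immediate consequence of the preceding proposition that $\rho$ is unbounded, which is exactly your first paragraph. Your optional explicit construction is more than the paper provides, but it is compatible with (and a reasonable elaboration of) the iterative idea in the proposition's proof.
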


\section{Idemaxial semigroups}\label{s:idemaxial}

In this section we introduce a new family of $\C$-semigroup, the idemaxial semigroups. As we recalled in the previous section,  if $S$ if a $\C$-semigroup with extremal rays $\tau_i$ with $1\leq i\leq k$ then $S\cap\tau_i$ is isomorphic to a numerical semigroup. We denote this numerical semigroup by $S_i$. In this section, we denote by $\phi_i$ the isomorphism such that $\phi_i(S\cap_tau_i)=S_i$.

\begin{Definition}
    We denote by $\pi_j$ the hyperplane containing the $j$-th elements of each $S\cap\tau_i$, by $F_i$ the Frobenius number of each $S_i$ and by $\pi_F$ the hyperplane containing $\phi_i^{-1}(F_i)$. We say that $S$ is an idemaxial semigroup if $S_1\approx\ldots\approx S_k$ and $S=(\cup_{i\geq 1}(\C\cap\pi_i))\cup\{x\in\C:x\cdot y > 0, y\in\pi_F\}$.
\end{Definition}

\begin{figure}
    \centering
\includegraphics[width=0.5\textwidth]{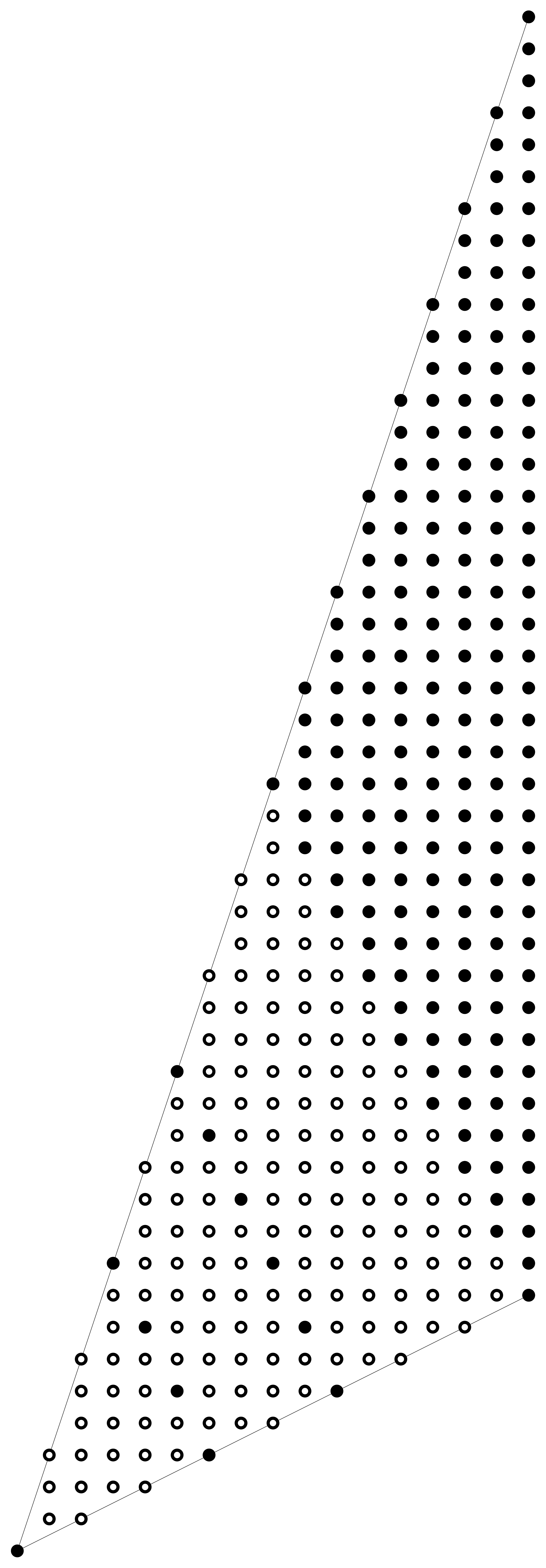}
    \caption{Example of an indemaxial semigroup with $S_1\approx S_2\approx\langle 3,5 \rangle$.}\label{e:idem}
\end{figure}

A graphical example of this kind of semigroup can be found in Figure \ref{e:idem}. In this case, $S_1$ and $S_2$ are isomorphic to $\langle 3,5 \rangle$. This family is usefull because it has good properties. We can, for example, find a bound where compute their Frobenius and pseudo-Frobenius set.

\begin{Proposition}
    Let $S$ be an idemaxial semigroups such that $\Phi(S\cap\tau_i) = S_i$ for some isomorphism $\Phi$. Let $m_i$ and $c_i$ the multiplicity and the conductor of $S_i$, respectively. Let $a_i$ be $\Phi^{-1}(m_i)$, $b_i$ be $\Phi^{-1}(c_i)$, $\pi_1$ the hyperplane which contains all $b_i-a_i$ and $\pi_2$ the hyperplane which contains all $b_i$. Then $\F\subset \{x\in\C\setminus S:x\geq_S \pi_1, x\leq_S\pi_2\}$.
\end{Proposition}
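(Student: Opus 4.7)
The strategy is to prove the sharper fact $\F\subset\pi_F$, where $\pi_F$ is the hyperplane through the Frobenius points $\Phi^{-1}(F_i)$, and then to observe that $\pi_F$ sits geometrically between $\pi_1$ and $\pi_2$.

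Setting up the layering: because $S_1\approx\cdots\approx S_k$ share a common multiplicity $m$ and conductor $c=F+1$, the defining hyperplanes of the idemaxial semigroup are a family of parallel integer hyperplanes and are therefore level sets of a single linear functional $\ell$ on $\N^p$. Normalising $\ell$ so that $\pi_j$ sits at level $j$, the definition of $S$ translates into: $\pi_j\cap\C\subset S$ whenever $j\in S_1$, $\pi_j\cap\C\subset\HH(S)$ whenever $j\in\HH(S_1)$, and $\{x\in\C:\ell(x)>F\}\subset S$. In particular $\HH(S)$ is the disjoint union of the $\pi_g\cap\C$ over $g\in\HH(S_1)$, and every gap has level at most $F$.

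The core step is to show that any $f\in\F$ satisfies $\ell(f)=F$. Write $g=\ell(f)\in\HH(S_1)$ and assume toward a contradiction that $g<F$. Since $\C$ is a full-dimensional rational polyhedral cone, the slice $\pi_{F-g}\cap\C$ contains a nonzero lattice point $v$. Then $f+v\in\C$ has level exactly $F$, so $f+v\in\pi_F\cap\C\subset\HH(S)$, while $v\in\C\setminus\{0\}$ gives $f<_\C f+v$, contradicting the maximality of $f$ in $(\HH(S),\leq_\C)$. Hence $\ell(f)=F$ and $f\in\pi_F$.

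To conclude, $m\geq 1$ and $c=F+1$ give $c-m\leq F<c$, so $\pi_F$ lies strictly between $\pi_1$ (at level $c-m$) and $\pi_2$ (at level $c$); reading $f\geq_S\pi_1$ and $f\leq_S\pi_2$ as the natural ``above/below'' relations induced by the $S$-layering places every $f\in\F$ in the prescribed slab. The main obstacle is the core step: producing the auxiliary $v$ at the prescribed integer level $F-g$ inside $\C$. This is precisely where idemaxiality is used, since the shared isomorphism type of the $S_i$ forces each nonnegative integer level to meet $\C$ in a nonempty lattice polytope, so $\ell$ behaves uniformly on every extremal ray.
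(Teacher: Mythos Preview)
Your argument is correct and in fact establishes something sharper than the proposition: you show $\F\subset\pi_F$, i.e.\ every Frobenius element lies precisely at the Frobenius level $F$, and then deduce the claimed slab bound from $c-m\le F<c$. The paper's proof instead attacks the two inequalities $f\le_S\pi_2$ and $f\ge_S\pi_1$ directly: the upper bound is immediate, and for the lower bound it argues (tersely) that if $f$ were below level $c-m$ then $f+a_i$, the shift by the multiplicity element, would still be a gap below $\pi_2$, contradicting maximality of $f$.

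Your route is the more robust one. By choosing $v$ at the specific level $F-g$ you guarantee that $f+v$ lands on the Frobenius level, which is certainly a gap level of $S_1$. By contrast, shifting by $a_i$ moves $f$ to level $g+m$, and nothing forces $g+m\in\HH(S_1)$; for instance, with $S_1=\langle 3,5\rangle$ a gap at level $2$ shifts to level $5\in S_1$. The cost of your approach is needing a lattice point at an arbitrary prescribed positive level $F-g$, but as you observe this comes for free from idemaxiality: the preimage $\phi_i^{-1}(F-g)$ on any extremal ray supplies such a point. So your argument both avoids the delicate point in the paper's shift-by-$a_i$ step and yields the exact location of $\F$ rather than merely a slab containing it. One small remark: your overloading of the symbol $\pi_j$ (indexing by level rather than by ordinal position in $S_1$, as the paper does) is harmless but worth flagging explicitly, since the proposition's $\pi_1,\pi_2$ already use yet a third convention.
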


\begin{proof}
    Let $f\in\F$. Clearly, $f\leq_S\pi_2$. Since $f<\pi_2$, $\pi_1\in f+a_i\leq_S\pi_2\not\in S$ and this is a contradiction.
\end{proof}

We remind the following definition.

\begin{Proposition}
    Let $S$ be an idemaxial semigroup $PF_i$ the set of pseudo-Frobenius numbers of $S_i$ and $\pi_j$ the hyperplane containing the $j$-th pseudo-Frobenius number of each $S_i$. Then the set of pseudo-Frobenius numbers of $S$ is $\{\cup(\pi_j\cap\C)\}\subset PF$. 
\end{Proposition}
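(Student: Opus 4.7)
The plan is to establish the claimed inclusion by fixing an arbitrary $x\in\pi_j\cap\C$, where $j$ indexes a pseudo-Frobenius of the common numerical semigroup $N\cong S_1\cong\cdots\cong S_k$, and verifying the two conditions that define a pseudo-Frobenius element of $S$: namely $x\in\HH(S)$, and $x+s\in S$ for every $s\in S\setminus\{0\}$.

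First I would make explicit the stratification of $\C$ implicit in the idemaxial structure. Because all isomorphisms $\phi_i$ identify $S\cap\tau_i$ with the same numerical semigroup $N$, the hyperplanes $\pi_j$ are parallel affine translates of one another, and hence can be realised as level sets $\pi_j=\ell^{-1}(j)$ of a single linear functional $\ell$ on $\N^p$. From the definition of idemaxial semigroup it then follows that $\C\cap\ell^{-1}(j)\subset S$ whenever $j\in N$ (including the region $j>F(N)$), and $\C\cap\ell^{-1}(j)\subset\HH(S)$ whenever $j$ is a gap of $N$. In particular, for every $s\in S$ the value $\ell(s)$ lies in $N$.

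With this setup in place, the argument becomes a direct transfer. Given $x\in\pi_j\cap\C$ with $j\in PF(N)$: since $j$ is a gap of $N$, the whole slice $\pi_j\cap\C$ sits inside $\HH(S)$, so $x\in\HH(S)$. For any $s\in S\setminus\{0\}$ I would then compute $\ell(x+s)=j+\ell(s)$; since $\ell(s)\in N\setminus\{0\}$ and $j$ is pseudo-Frobenius for $N$, this forces $j+\ell(s)\in N$. Because $\C$ is closed under addition we have $x+s\in\C\cap\ell^{-1}(j+\ell(s))\subset S$, which is precisely the pseudo-Frobenius condition for $x$, and so $\cup_j(\pi_j\cap\C)\subset PF(S)$.

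The hard part will be the first step, justifying the existence of a single linear functional $\ell$ whose level sets are the hyperplanes $\pi_j$ and whose values on $S$ land in $N$. This relies on the compatibility of the isomorphisms $\phi_i$ across the extremal rays (in the sense that the $j$-th elements of the different rays are coplanar, which is precisely what the notation $\pi_j$ assumes), a compatibility built into the idemaxial definition. Once this linear-algebraic scaffolding is fixed, the transfer of the pseudo-Frobenius property from $N$ to every point of $\pi_j\cap\C$ is essentially automatic.
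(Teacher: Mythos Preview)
Your proposal is correct and follows essentially the same approach as the paper: verify that points of $\pi_j\cap\C$ are gaps, then check that adding any nonzero $s\in S$ lands back in $S$ by tracking which hyperplane slice the sum falls into. Your version is considerably more explicit---introducing the linear functional $\ell$ to formalise the slice structure and carefully reducing the pseudo-Frobenius condition for $S$ to that for the underlying numerical semigroup $N$---whereas the paper's argument compresses this into two sentences, but the underlying idea is identical.
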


\begin{proof}
    By definition, $\{\cup(\pi_j\cap\C)\}\subset\HH(S)$. Moreover, if $s\in S$ and $f\in PF$, then $f+S\in\pi$ with $\pi$ containing elements of $S_i$ for all $i$, so $f+s\in S$.
\end{proof}

\section{Wilf's conjecture}\label{s:wilf}

We cannot end a work about semigroups without a brief mention of the Wilf's conjecture. Let $S$ be a numerical semigroup, Wilf's conjecture claims that $e(S)n(S)\geq c(S)$ where $e(S)$, $n(S)$ and $c(S)$ are its embedding dimension, the cardinal of its sporadic elements and its conductor (see \cite{wilf}). This conjecture has been generalized in several ways (see \cite{nuestrowilf, wilfgeneralizado}). In this works the authors generalise this conjecture for $\C$-semigroups using monomial total orders. On the other hand, in \cite{otrowilf} Wilf's conjecture is extended to generalized numerical semigroup using partial orders. In this section we are going to give even a more general conjecture using the induced order of a $\C$-semigroup.

Let $S$ be a $\C$-semigroup. We use the notation in \cite{otrowilf}:
$$c(S)=|\{a\in\C:a\leq b\mbox{ for some }b\in H(S)\}|,$$
$$n(S)=|\{a\in S:a\leq b\mbox{ for some }b\in H(S)\}|,$$
where $H(S)=\C\setminus S$.

Therefore we can pose the General Extended Wilf's conjecture:
$$e(S)n(S)\geq pc(S).$$

This conjecture has been checked in a computational way and no counterexamples have been found.

\section{Discussion}

The traditional approach to study $C$-semigroups was to set a total order in a completely arbitrary way and then study the desired properties. This approach has the following issue: if the order is changed, the results obtained may not be true.

In this paper we replace the total order chosen by the researchers by the partial order induced by the semigroup itself. As this order depends exclusively on the semigroup, it does not depend on the researcher, so the results obtained are less artificial.  For example, when classical invariants were generalised in the study of semigroups, such as the Frobenius number or multiplicity, being completely dependent on the order, they changed as the order changed.

Moreover, since numerical semigroups have only one Frobenius element and only one multiplicity, $C$-semigroups were forced to have only one of these elements. However, with our method of study, we have a set of elements for each of these invariants, which is more natural since we are in a higher dimension. In addition to this, in this work, we have given a family of $C$-semigroups with good properties.

This work aims to lay the foundations for future work on $C$-semigroups.


\begin{thebibliography}{21}

\bibitem{otrowilf}
Bilen, M.; Sakran, N. On generalized Wilf conjectures. {\bf 2023} Available online: \url{https://arxiv.org/abs/2306.05530}.

\bibitem{ref2}
Bras-Amorós, M. Fibonacci-like behavior of the number of numerical semigroups of a given genus. {\em Semigroup Forum} {\bf 2008}, {\em 76}, 379--384.

\bibitem{Cox}
Cox, D.; Little, J.; O'Shea, D. {\em Ideals, Varieties, and Algorithms: An Introduction to Computational Algebraic Geometry and Commutative Algebra}, Publisher: Springer, Switzerland, 2015.

\bibitem{elasticity}
Chapman, S.T.; García-García, J.I.; García-Sánchez, P.A.; Rosales, J.C.Computing the elasticity of a Krull monoid. {\em Linear Algebra and its Applications} {\bf 2001}, {\em 336}, 191--200.

\bibitem{ref4}
Chapman, S.T.; García-Sánchez, P.A.; Llena, D.; Marshall, J.
Elements in a numerical semigroup with factorizations of the same length.
{\em Canadian Mathematical Bulletin} {\bf 2011}, {\em 54}, 39--43.

\bibitem{wilfgeneralizado}
Cisto, C.; DiPasquale, M.; Failla, G; Flores, F.; Peterson, C.; Utano, R. A generalization of Wilf’s conjecture for generalized numerical semigroups. {\em Semigroup Forum} {\bf 2020}, {\em 101}, 303--325. 

\bibitem{ordeninducido}
Delgado, M,; García-Sánchez, P.A.; Robles-Pérez, A.M. Numerical semigroups with a given set of pseudo-Frobenius numbers. 
{\em LMS Journal of Computation and Mathematics} {\bf 2016}, {\em 19}, 186--205. 
    
\bibitem{Juande}
Díaz-Ramírez, J.D., García-García, J.I., Marín-Aragón, D.; Vigneron-Tenorio, A. Characterizing affine C-semigroups. {\em Ricerche di Matematica} {\bf 2022}, {\em 71}, 283--296.

\bibitem{wilfpalestina}
Dhayni, M.
Wilf's conjecture for numerical semigroups.
{\em Palestine Journal of Mathematics} {\bf 2018}, {\em 7(2)}, 385--396.

\bibitem{wilfShalom}
Eliahou, S.; Marín-Aragón, D.
On numerical semigroups with at most 12 left elements.
{\em Communications in Algebra} {\bf 2021}, {\em 49(6)}, 2402-2422.

\bibitem{fromentin}
Fromentin, J.; Hivert, F.
Exploring the tree of numerical semigroups.
{\em Mathematics of Computation} {\bf 2016}, {\em 85(301)}, pp. 2553--2568. 

\bibitem{Adrian}
García-García, J.I.; Marín-Aragón, D.; Sánchez-Loureiro, A.; Vigneron-Tenorio, A. Some Properties of Affine $\C$-semigroups. {\em Results Math} {\bf 2023}, {\em 79(52)}.

\bibitem{codigo}
García-García, J.I.; Marín-Aragón, D.; Sánchez-Roselly Navarro, A.;Vigneron-Tenorio, A. Commutative monoids {\bf 2024}. Available online: \url{https://github.com/D-marina/CommutativeMonoids}.

\bibitem{omega}
García-García, J.I.; Marín-Aragón, D.; Vigneron-Tenorio, A. Asymptotic $\omega$-Primality of Finitely Generated Cancelative Commutative Monoids.
{\em Mathematics} {\bf 2023}, {\em 11(4)}, 790.

\bibitem{nuestrowilf}
García-García, J.I.; Marín-Aragón, D.; Vigneron-Tenorio, A. An extension of Wilf's conjecture to affine semigroup. {\em Semigroup Forum} {\bf 2018}, {\em 96}, 396--408.

\bibitem{Ojeda}
García-García, J.I.; Ojeda, I.; Rosales, J.C.; Vigneron-Tenorio, A. On pseudo-Frobenius elements of submonoids of $\N^d$. {\em Collect. Math.} {\bf 2020}, {\em 71}, 189–-204.

\bibitem{ref3}
García-Sánchez, P.A.; Marín-Aragón, D.; Robles-Pérez. The tree of numerical semigroups with low multiplicity. {\bf 2018} Available online: \url{https://arxiv.org/abs/1803.06879}.

\bibitem{libroRosales}
Rosales, J.C.; García-Sánchez, P.A. {\em Numerical Semigroup}; Publisher: Springer, New York, 2009.

\bibitem{ref1}
Rosales, J.C. García-Sánchez, P.A; García-García, J.I. Every positive integer is the Frobenius number of a numerical semigroup with three generators. {\em Mathematica Scandinavica} {\bf 2008}, \emph{94}, 5--12

\bibitem{wilf}
Wilf, H.S. A circle-of-lights algortihm for the ``money-changing problem''. {\em Am. Math. Monthly} {\bf 1978}, {\em 85(7)}, 562--565.

\bibitem{zhai}
Zhai, A. Fibonacci-like growth of numerical semigroups of a given genus. 
{\em Semigroup Forum} {\bf 2013}, {\em 86}, 634--662.

\end{thebibliography}
\end{document}